\newcommand{\proofendsign}{$\Box$}
\renewenvironment{proof}{{\noindent \bf Proof }}{{\hspace*{\fill}\proofendsign\par\bigskip}}
\newcommand{\cbw}{{\bf cut\textrm{-}bool}}
\newcommand{\bw}{{\bf boolw}}
\newcommand{\ccw}{{\bf cut\textrm{-}car}}
\newcommand{\cw}{{\bf carw}}
\newcommand{\tw}{{\bf tw}}
\newcommand{\vc}{{\bf VC}}
\begin{document}
\pagestyle{headings}  % switches on printing of running heads
\title{On the boolean-width of a graph: structure and applications}

%\titlerunning{Boolean-width}
\author{Yuri Rabinovich\inst{1} \and J. A. Telle\inst{2}\thanks{Supported by the Norwegian Research Council, project PARALGO.}}
\institute{
Department of Computer Science, Haifa University, Israel \and
Department of Informatics, University of Bergen, Norway\\
\texttt{yuri@cs.haifa.ac.il, telle@ii.uib.no}
}
\maketitle
\begin{abstract}
We study the recently introduced boolean-width of graphs.
Our structural results are as follows.
Firstly, we show that almost surely the boolean-width of a random
graph on $n$ vertices is $O(\log^2 n)$, and it is easy to find the corresponding decomposition tree.

Secondly, for any constant $d$ a graph of maximum degree $d$ has boolean-width linear in treewidth.
This implies that almost surely the boolean-width of a (sparse) random
$d$-regular graph on $n$ vertices is linear in $n$.

Thirdly,  we show that the boolean-cut value is well approximated by VC dimension
of corresponding set system. Since VC dimension is widely studied, we hope that this structural result will prove helpful in better understanding of boolean-width.

Combining our first structural result with algorithms from Bui-Xuan et al \cite{BTV09,BTV09II} we get
for random graphs
quasi-polynomial $O^*(2^{O(\log ^4 n)})$ time algorithms for a large class of vertex subset and vertex partitioning problems.
\end{abstract}
%
%
%%%%%%%%%%%%%%%%%%%%%%%%%%%%%%%%%%%%%%%%%%%%%%%%%%%%%%%%%%%%%%%%%%%%%%%%%%%%%%%%
%%%%%%%%%%%%%%%%%%%%%%%%%%%%%%%%%%%%%%%%%%%%%%%%%%%%%%%%%%%%%%%%%%%%%%%%%%%%%%%%
\section{Introduction}

Width parameters of graphs, like tree-width and clique-width, 
 are important in the theory of graph algorithms, see e.g. \cite{HOSG08}.
%Given a decomposition of tree-width (or branch-width or clique-width or rank-width)  $k$ of a graph $G$ on $n$ vertices many NP-hard optimization problems can be solved in time $O(2^{p1(k)}p2(n))$, for polynomial functions $p1, p2$.
Recently, Bui-Xuan, Telle and Vatshelle \cite{BTV09} introduced a new width parameter of graphs called boolean-width. It is our feeling that this parameter  has interesting structural properties and is important for applications, e.g. its value for any graph is no larger than branch-width or clique-width
and if a decomposition of small boolean-width $k$ is given then we get fast algorithms for a large class of problems. Such problems include Minimum Dominating Set and Maximum Independent Set 
solved in time $O(n(n + 2^{3k} k))$ \cite{BTV09},
and also a large class of vertex subset and vertex partitioning problems in time
$O^*(2^{d \times q \times k^2})$, for problem-specific constants
$d$ and $q$ \cite{BTV09II}.
This paper is dedicated to a study of boolean-width.

We show that asymptotically almost surely the boolean-width of
a random graph on $n$ vertices is $O(log^2 n)$, for any decomposition tree.
This implies quasi-polynomial time $O^*(2^{O(\log ^4 n)})$ algorithms for the above mentioned problems on random graphs.
This result contrasts sharply with a recent result of
Mare\v{c}ek \cite{M09} who showed that almost surely 
a random graph on $n$ vertices has rank-width linear in $n$. The latter parameter was introduced by Oum and Seymour and is smaller than tree-width+1, branch-width, clique-width and NLC-width.

For sparse random graphs we show a complementary result, namely that for any constant $d$ almost surely the boolean-width of a random
$d$-regular graph on $n$ vertices is linear in $n$.
This as corollary to another result, that any  graph of maximum degree $d$ has boolean-width linear in treewidth.

Finally,  we show that the boolean-cut value is well approximated by VC dimension
of corresponding set system. Since VC dimension is widely studied and relatively well understood, we hope that this structural result will prove helpful in future studies.

%%%%%%%%%%%%%%%%%%%%%%%%%%%%%%%%%%%%%%%%%%%%%%%%%%%%%%%%%%%%%%%%%%%%%%%%%%%%%%%%
%%%%%%%%%%%%%%%%%%%%%%%%%%%%%%%%%%%%%%%%%%%%%%%%%%%%%%%%%%%%%%%%%%%%%%%%%%%%%%%%
\section{Definitions}

We consider undirected graphs without loops and denote the vertex set of a graph $G$ by $V(G)$
and the neighbors of a vertex $v$ by $N(v)$.
For $A\subseteq V(G)$ we let $\overline{A}$ denote the set $ V(G) \setminus A$
and let $N(A) \subseteq \overline{A}$ denote the neighbors of $A$ in $\overline{A}$.
The following formalism is standard in graph and matroid decompositions (see, {\sl e.g.},~\cite{RS91}).
\begin{definition}\label{def_f_width}
A decomposition tree of a graph $G$ is a pair $(T,\delta)$ where $T$ is a tree having internal nodes of degree three and $n=|V(G)|$ leaves, and $\delta$ is a bijection between the vertices of $G$ and the leaves of $T$.
Every edge of $T$ defines a cut $\{A,\overline{A}\}$ of the graph, i.e. a partition of $V(G)$ in the two parts given, via $\delta$, by the leaves of the 
two subtrees of $T$ we get by removing the edge.
Let $f:2^V\rightarrow\mathbb{R}$ be a symmetric function, i.e. $f(A)=f(\overline{A})$ for all $A\subseteq V(G)$, also called a \emph{cut function}.
The $f$-width of $(T,\delta)$ is the maximum value of $f(A)$, taken over all
cuts $\{A, \overline A\}$ of $G$ given by an edge $uv$ of $T$.
The $f$-width of $G$ is the minimum $f$-width over all decomposition trees of $G$.
\end{definition}

\begin{definition}[Boolean-width]
\label{def:booleanWidth}
The $\cbw:2^{V(G)}\rightarrow\mathbb{R}$ function of a graph $G$ is defined as
$$\cbw(A)=\log_2|\{S\subseteq \overline{A}:~\exists X\subseteq A ~~ \wedge ~~
S=N(X)\}|$$
%S =\overline{A} \cap \bigcup_{x\in X}N(x)\}|$$
It is known from boolean matrix theory that $\cbw$ is symmetric~\cite[Theorem 1.2.3]{K82}.
Using Definition~\ref{def_f_width} with $f=\cbw$ we define the boolean-width of 
a decomposition tree, denoted $\bw(T,\delta)$, and
the boolean-width of a graph, denoted $\bw(G)$.
\end{definition}
 
For a vertex subset $A$, the value of $\cbw(A)$ can also be seen as the logarithm in base $2$ of the number of pairwise different vectors that are spanned, via boolean sums (1+1=1), by the rows  of the
$A \times \overline{A}$ sub-matrix of the adjacency matrix of $G$.

%%%%%%%%%%%%%%%%%%%%%%%%%%%%%%%%%%%%%%%%%%%%%%%%%%%%%%%%%%%%%%%%%%%%%%%%%%%%%%%%
%%%%%%%%%%%%%%%%%%%%%%%%%%%%%%%%%%%%%%%%%%%%%%%%%%%%%%%%%%%%%%%%%%%%%%%%%%%%%%%%
\section{Structural Results}

\subsection{Random graphs}

Let $G_p$ be a random graph on $n$ vertices where each edge is chosen randomly and independently 
with probability $p$. 
\begin{theorem}
 \label{th:bw}
Almost surely, $\bw(G_p)\,=\, O\left( \frac{\ln^2 n}{p} \right)$.
\end{theorem}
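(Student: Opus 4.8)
The plan is to exhibit, with high probability, a \emph{single} cut-independent property of $G_p$ that simultaneously forces every cut to have small $\cbw$ value; then $\bw(T,\delta)$ is small for \emph{every} decomposition tree, and in particular $\bw(G_p)$ is small. Recall that $\cbw(A)=\log_2|\mathcal{F}_A|$, where $\mathcal{F}_A=\{N(X)\cap\overline{A}:X\subseteq A\}$ is the union-closure (boolean sums of rows) of the single-vertex neighborhoods $\{N(v)\cap\overline{A}:v\in A\}$. The key reduction I would use is: if for every $X\subseteq A$ the set $N(X)\cap\overline{A}$ is already dominated by some subset $X'\subseteq X$ of size at most $t$, then $N(X')\cap\overline{A}=N(X)\cap\overline{A}$ (the inclusion $\subseteq$ holds since $X'\subseteq X$, and $\supseteq$ is exactly domination). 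Hence $\mathcal{F}_A=\{N(X')\cap\overline{A}:X'\subseteq A,\ |X'|\le t\}$ has at most $n^{t+1}$ members, so $\cbw(A)\le(t+1)\log_2 n$; taking $t=O(\ln n/p)$ gives the claimed $O(\ln^2 n/p)$ bound for all $A$ at once.

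To produce such an $X'$ I would run greedy domination: maintain the uncovered remainder $R\subseteq N(X)\cap\overline{A}$ and repeatedly add the vertex of $X$ covering the most of $R$, split into a \textbf{large phase} (while $|R|\ge\ell_0:=\tfrac{32}{p}\ln n$) and a \textbf{small phase}. In a random graph a single vertex typically hits a $p$-fraction of any fixed set, so in the large phase some chosen vertex covers at least $\tfrac{p}{2}|R|$, shrinking $|R|$ by a factor $(1-\tfrac{p}{2})$; this reaches $\ell_0$ within $\tfrac{2}{p}\ln n$ steps. In the small phase each step covers at least one new vertex (since $X$ dominates $R$), finishing within a further $\ell_0=O(\ln n/p)$ steps. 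Thus $t=O(\ln n/p)$, as required.

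The probabilistic content, and the main obstacle, is justifying the large-phase step for every cut and every $X$ at once without paying a hopeless union bound over the $2^n$ possible cuts. I would isolate the cut-independent bad event $\mathcal{B}$: that there exist disjoint sets $Y,R$ with $|Y|,|R|\ge\ell_0$ such that no vertex of $Y$ covers at least $\tfrac{p}{2}|R|$ of $R$. For fixed disjoint $Y,R$ the events $\{|N(v)\cap R|<\tfrac{p}{2}|R|\}$, $v\in Y$, are independent (disjoint edge sets) and each has probability at most $e^{-p|R|/8}$ by a Chernoff bound, so their conjunction has probability at most $e^{-p|R|\,|Y|/8}$. Writing $|Y|=m$, $|R|=s$, and bounding the number of such pairs by $n^{m+s}$, the union bound gives
\[ \sum_{m,s\ge\ell_0} n^{m+s}\,e^{-psm/8}\ \le\ \sum_{m,s\ge\ell_0} n^{-(m+s)}\ =\ o(1), \]
where the middle inequality uses $\tfrac{psm}{8}=\tfrac{1}{16}(ps)m+\tfrac{1}{16}(pm)s\ge 2(m+s)\ln n$, valid because $ps,pm\ge 32\ln n$. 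Crucially, $\mathcal{B}$ never mentions $A$, so conditioning on $\overline{\mathcal{B}}$ validates the greedy step for all cuts simultaneously.

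It remains to check that the quantifiers in $\overline{\mathcal{B}}$ apply to the objects that actually arise: the case $|X|\le t$ needs no work (take $X'=X$), while for $|X|>t$ we have $|Y|=|X|>t\ge\ell_0$, and every large-phase remainder satisfies $|R|\ge\ell_0$, so both size constraints hold and $X$ dominates $R$. Finally, the regime $p\lesssim\ln^2 n/n$ is trivial, since $\cbw(A)\le\min(|A|,|\overline{A}|)\le n/2$ always, which is already $O(\ln^2 n/p)$ there. Assembling these pieces yields $\bw(T,\delta)=O(\ln^2 n/p)$ for every decomposition tree, hence $\bw(G_p)=O(\ln^2 n/p)$ almost surely.
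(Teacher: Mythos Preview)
Your proof is correct, but the paper's route is shorter and uses a simpler cut-independent property. The paper's key lemma is that, with $k=\tfrac{2\ln n}{p}$, almost surely \emph{every} set $S$ of size exactly $k$ satisfies $|N(S)|\ge|\overline S|-k$; this needs only one Chernoff bound and a union bound over the $\binom{n}{k}$ such sets. Granted this, the count of distinct $N(X)\cap\overline A$ splits immediately: sets $X\subseteq A$ with $|X|\le k$ contribute at most $\sum_{i\le k}\binom{n}{i}$, while for $|X|\ge k$ any $k$-subset $S\subseteq X$ already has $|N(S)\cap\overline A|\ge|\overline A|-k$, hence so does $N(X)\cap\overline A$, and such ``nearly full'' subsets of $\overline A$ also number at most $\sum_{i\le k}\binom{n}{i}$. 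Taking logarithms gives the same $O(\ln^2 n/p)$ bound for every cut. Your argument instead establishes a two-set property (for every large disjoint pair $(Y,R)$ some $v\in Y$ covers a $p/2$-fraction of $R$), requiring a double union bound over all sizes, and then runs a two-phase greedy cover to produce the small representative $X'$. Both approaches reach the same conclusion; the paper's single-set lemma simply bypasses the greedy machinery. Indeed, the paper's lemma yields your reduction directly: once a $k$-subset $S\subseteq X$ misses at most $k$ points of $\overline A$, adding at most $k$ further vertices of $X$ (one neighbour for each missed point lying in $N(X)$) gives $X'$ of size $\le 2k$ with $N(X')\cap\overline A=N(X)\cap\overline A$.
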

We prove first the following lemma. 
\begin{lemma}
 \label{cl:k-sets}
Let $G_p$ be a graph as above, and let $k_p=\frac{2\ln n}{p}$.
Then, almost surely, for {\em all} subsets of vertices $S \subset V(G)$ with $|S|=k_p$ it holds that
$|N(S)| \geq |\overline{S}| - k_p$. 
\end{lemma}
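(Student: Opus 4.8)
The plan is to analyze one fixed set $S$ first and only afterwards pay for a union bound over all of them. Fix $S \subseteq V(G)$ with $|S| = k_p$, and call a vertex $v \in \overline{S}$ \emph{lonely} if it has no neighbor in $S$. Writing $X_S$ for the number of lonely vertices, every vertex of $\overline{S}$ is either counted in $N(S)$ or is lonely, so $|N(S)| = |\overline{S}| - X_S$; hence the conclusion of the lemma for this particular $S$ is exactly the statement $X_S \le k_p$, and the failure event is $X_S > k_p$. For a single $v$, loneliness means that all $|S| = k_p$ potential edges from $v$ into $S$ are absent, so that $\Pr[v \text{ lonely}] = (1-p)^{k_p} \le e^{-p k_p} = e^{-2\ln n} = n^{-2}$, where I have used the definition $k_p = \frac{2\ln n}{p}$. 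Moreover, the loneliness events for distinct $v \in \overline{S}$ involve pairwise disjoint sets of potential edges and are therefore mutually independent.

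Next I would bound the upper tail of $X_S$. Since $X_S$ is a sum of at most $n$ independent indicators, each of probability at most $n^{-2}$, the event $X_S > k_p$ forces some specific collection of $k_p$ vertices of $\overline{S}$ to all be lonely. Taking a union bound over the choice of that collection and using independence gives
$$\Pr[X_S > k_p] \;\le\; \Pr[X_S \ge k_p] \;\le\; {n \choose k_p}\,(n^{-2})^{k_p}.$$

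Finally I would union bound over the ${n \choose k_p}$ choices of $S$. The probability that the conclusion fails for at least one such $S$ is then at most
$${n \choose k_p}^2 n^{-2k_p} \;\le\; \left(\frac{ne}{k_p}\right)^{2k_p} n^{-2k_p} \;=\; \left(\frac{e}{k_p}\right)^{2k_p},$$
using the standard estimate ${n \choose k_p} \le (ne/k_p)^{k_p}$. Since $k_p = \frac{2\ln n}{p} \ge 2\ln n \to \infty$, eventually $k_p > e$ and the right-hand side tends to $0$, establishing the "almost surely" statement over all $S$ simultaneously.

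The main obstacle is exactly this last union bound: it ranges over exponentially many sets $S$, so the per-set tail estimate must be strong enough to survive multiplication by ${n \choose k_p}$. This is precisely what the constant $2$ in the definition of $k_p$ buys. It forces each vertex to be lonely with probability only $n^{-2}$, so that the factor $n^{-2k_p}$ cancels both binomial coefficients, whose product behaves like $n^{2k_p}$ at leading order, leaving behind the vanishing lower-order term $(e/k_p)^{2k_p}$. Any constant strictly smaller than $2$ would leave an uncompensated positive power of $n$ in the exponent and make the bound blow up, so the calibration of $k_p$ is the crucial quantitative point of the argument.
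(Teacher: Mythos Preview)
Your proof is correct and follows the same overall architecture as the paper's: fix $S$, count the vertices of $\overline{S}$ with no neighbour in $S$, bound the probability that this count exceeds $k_p$, and then union-bound over all $S$. The one substantive difference is in the tail estimate for $X_S$. The paper invokes a multiplicative Chernoff bound, obtaining $\Pr[X_S \ge k_p] < (e\mu/k_p)^{k_p} < n^{-k_p}$ from the fact that $\mu = (1-p)^{k_p}(n-k_p) < n^{-1}$, and then finishes with $\binom{n}{k_p} n^{-k_p} < (k_p!)^{-1} = o(1)$. You instead observe directly that $X_S \ge k_p$ forces some $k_p$-subset of $\overline{S}$ to be simultaneously lonely, and union-bound over those subsets to get $\Pr[X_S \ge k_p] \le \binom{n}{k_p} n^{-2k_p}$; the extra binomial is then absorbed together with the one from the outer union bound. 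Your route is more elementary (no concentration inequality needed) and makes transparent why the constant $2$ in $k_p$ is exactly what is required for the two $\binom{n}{k_p}$ factors to be killed by $n^{-2k_p}$; the paper's Chernoff step hides this arithmetic inside the bound $(e\mu/k_p)^{k_p}$, but needs the same calibration.
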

\begin{proof}
In what follows, we write simply $G$ and $k$. 
Fix a particular $S$ with $|S|=k$. For every $v\in \overline{S}$, let $X_v$ be 1 if $v\not\in N(S)$, 
and $0$ otherwise. Clearly, $X_v=1$ with probability $(1-p)^k$, and 
$\sum_{v\in \overline{S}} X_v =  |\overline{S}|-|N(S)|$. 
Observe that $E[\sum_{v\not\in S} X_v] \;=\; (1-p)^k (n -k) \;<\; (1-p)^k n$. Call this expectation $\mu$.
By Chernoff Bound (see e.g. \cite{MR95}, p.68), 
\[
 \Pr\left[ \sum_{v\in \overline{S}} X_v \geq k\right] ~<~ \left({\frac{e\mu}{k}}\right)^k  ~<~  
\left( (1-p)^{k} n \right)^k  ~=~ \left( (1-p)^{2\ln n/p} n \right)^k ~<~ n^{-k}\,,  
\]
the last inequality due to the fact that for $p\in (0,1)$,  $(1-p)^{\frac{1}{p}} \leq e^{-1}$.

Applying the union bound, we conclude that the probability that there exists $S$ of size $k$ such that
$|N(S)| < |\overline{S}| - k$ is at most
\[
 {n \choose k} \cdot n^{-k} ~<~ (k!)^{-1} ~=~ o(1)\,
\]
and the statement follows.
\end{proof}
\begin{corollary}
 \label{cor:random_cut}
For $G=G_p$ and $k=k_p$ as before, for {\em all} cuts $\left\{ A,\overline{A} \right\}$ in
$G$ it holds almost surely that $\cbw(A) \geq O\left( \frac{\ln^2 n}{p} \right)$. 
\end{corollary}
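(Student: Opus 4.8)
The plan is to establish the matching upper bound $\cbw(A)\le O\!\left(\frac{\ln^2 n}{p}\right)$ for every cut (I read the statement as an upper bound, since that is exactly what feeds Theorem~\ref{th:bw}, whereas ``$\geq O(\cdots)$'' would be vacuous), by conditioning on the almost-sure event supplied by Lemma~\ref{cl:k-sets} and counting the distinct traces on $\overline A$ of neighborhoods of subsets of $A$. Fix an arbitrary cut $\{A,\overline A\}$. I may assume $|A|\ge k$ and $|\overline A|\ge k$, since otherwise one side has fewer than $k$ vertices and $\cbw(A)\le\min(|A|,|\overline A|)<k=O(\ln n/p)$ trivially. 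Every set counted by $\cbw(A)$ has the form $N(X)\cap\overline A$ for some $X\subseteq A$, and I would split the count according to whether $|X|\ge k$ or $|X|<k$.

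For the large subsets, take any $X\subseteq A$ with $|X|\ge k$ and choose $S\subseteq X$ with $|S|=k$. Monotonicity of neighborhoods gives $N(S)\subseteq N(X)$, and since $S\subseteq A$ we have $\overline A\subseteq\overline S$; hence the almost-sure conclusion $|N(S)|\ge|\overline S|-k$ of Lemma~\ref{cl:k-sets} forces $N(X)\cap\overline A$ to omit at most $k$ vertices of $\overline A$. Thus every trace arising from a large $X$ is the complement in $\overline A$ of a set of size at most $k$, so the number of such traces is at most $\sum_{i=0}^{k}{|\overline A| \choose i}\le\sum_{i=0}^{k}{n \choose i}$. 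The small subsets $X\subseteq A$ with $|X|<k$ each contribute at most one trace, giving at most $\sum_{i=0}^{k-1}{|A| \choose i}\le\sum_{i=0}^{k}{n \choose i}$ further traces.

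Adding the two regimes bounds the quantity inside the logarithm by $2\sum_{i=0}^{k}{n \choose i}\le 2(k+1){n \choose k}$, whence $\cbw(A)\le 1+\log_2\!\big((k+1){n \choose k}\big)=O(k\log n)$. Substituting $k=k_p=2\ln n/p$ yields $\cbw(A)=O(\ln^2 n/p)$. Since the event of Lemma~\ref{cl:k-sets} holds almost surely and the bound was derived for an arbitrary cut, it holds simultaneously for all cuts; crucially, no further union bound over cuts is needed, because Lemma~\ref{cl:k-sets} already quantifies over all $k$-subsets.

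The main obstacle I anticipate is the bookkeeping in the large-$X$ case: one must transfer the whole-graph statement of Lemma~\ref{cl:k-sets} (about $N(S)$ inside $\overline S$) to the trace $N(X)\cap\overline A$, combining the monotonicity $N(S)\subseteq N(X)$ with the inclusion $\overline A\subseteq\overline S$ to conclude that the trace is co-small in $\overline A$. The dichotomy at the threshold $|X|=k$ is what keeps the count polynomial of order $n^{k}$ rather than exponential, and verifying that both regimes are controlled by $\sum_{i\le k}{n \choose i}$ is the heart of the argument.
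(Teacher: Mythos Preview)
Your proposal is correct and follows essentially the same route as the paper: split according to whether $|X|\ge k$ or $|X|<k$, use Lemma~\ref{cl:k-sets} to show that large $X$ yield traces co-small in $\overline A$, and bound both regimes by $\sum_{i\le k}{n\choose i}$. If anything, you are more careful than the paper, which asserts $|N(S)\cap\overline A|\ge|\overline A|-k$ for $|S|\ge k$ directly, whereas you spell out the monotonicity step (pick a $k$-subset of $X$, use $\overline A\subseteq\overline S$) and also dispose of the trivial case $\min(|A|,|\overline A|)<k$ that the paper leaves implicit.
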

\begin{proof}
The number of distinct sets $N(S) \cap \overline{A}$ contributed by the sets $S \subseteq A$ with $|S| \leq k$
is at most $\sum_{i=0}^k {n \choose i}$.  By the previous lemma, for all sets
$S \subseteq A$ with $|S| \geq k$, it holds almost surely that $|N(S) \cap \overline{A}| \geq |\overline{A}| - k$.
Therefore, almost surely, the sets $S \subseteq A$ with $|S| \geq k$, also contribute at most 
$\sum_{i=0}^k {n \choose i}$ distinct sets $N(S) \cap \overline{A}$. Thus, almost surely there are 
at most $2\sum_{i=0}^k {n \choose i}$ distinct sets $N(S) \cap \overline{A}$ altogether. Taking the 
logarithm, we arrive at the desired conclusion.
\end{proof}
The Theorem~\ref{th:bw} now follows easily: for {\em any} decomposition tree, all the
cuts it defines will almost surely have a cut-Boolean-width at most $O\left(\frac{\ln^2 n}{p} \right)$.

\subsection{Graphs of bounded degree}

\begin{theorem}
\label{th:regular}
For $G$ a graph of maximum degree $d$ we have 
\[
 \frac{1}{6d^2}\cdot\tw(G) ~\leq~ \bw(G) ~\leq~ \tw(G)+1\,.
\]
\end{theorem}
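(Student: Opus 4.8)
The plan is to establish the two inequalities separately, as they require genuinely different arguments.

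For the upper bound $\bw(G) \leq \tw(G)+1$, I would exploit the general fact that boolean-width is never larger than other standard width parameters. Since the paper's introduction already notes that boolean-width is no larger than branch-width or clique-width, and it is known that clique-width is bounded in terms of treewidth, the cleanest route is a direct decomposition-tree argument. Given a tree decomposition of $G$ of width $\tw(G)$, I would convert it into a decomposition tree $(T,\delta)$ in the sense of Definition~\ref{def_f_width}, and then bound $\cbw(A)$ for each cut $\{A,\overline A\}$ it induces. The key observation is that for any cut arising from such a conversion, the neighborhoods $N(X)$ for $X \subseteq A$ are controlled by a separator of size at most $\tw(G)+1$: every vertex of $A$ sends its edges across the cut only through vertices in a bag of bounded size, so the number of distinct sets $N(X)\cap\overline A$ is at most $2^{\tw(G)+1}$, giving $\cbw(A) \leq \tw(G)+1$. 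Taking the maximum over all cuts and then the minimum over decomposition trees yields the claim.

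For the lower bound $\tw(G) \leq 6d^2 \cdot \bw(G)$, the natural strategy is to show that bounded boolean-width forces bounded treewidth when the degree is bounded, by relating boolean-width back to the size of the cuts themselves. The crucial point is that in a graph of maximum degree $d$, a single cut $\{A,\overline A\}$ can have at most $d\cdot|N(A)|$ edges crossing it, and more importantly, the number of distinct neighborhoods $N(X)\cap\overline A$ is tied to the structure of the boundary. I would argue that $\cbw(A)$ cannot be too small relative to the edge-boundary: since each vertex in $\overline A$ adjacent to $A$ has degree at most $d$, the boundary set $N(A)$ has size at least roughly $(\text{number of crossing edges})/d$, and the distinct neighborhoods realized across the cut force $\cbw(A) \geq |N(A)|/c$ for some constant $c$ depending on $d$. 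Concretely, I expect a bound of the form $\cbw(A) \geq \Omega(|N(A)|/d)$, which translates a small-boolean-width decomposition tree into a tree decomposition (or branch decomposition) whose every cut has small vertex-boundary, hence small treewidth.

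The main obstacle will be the lower bound, specifically making precise how a decomposition tree of small boolean-width yields a small-width branch or tree decomposition with the correct constant $1/(6d^2)$. The subtlety is that boolean-width measures the logarithm of the number of \emph{distinct} neighborhoods, not the boundary size directly, so I must lower-bound the number of distinct realized neighborhoods in terms of the boundary. In a bounded-degree graph one can typically find an induced matching or a large set of boundary vertices with independent neighborhood behavior across the cut, so that flipping membership of these vertices produces exponentially many distinct sets $N(X)\cap\overline A$; this is where the factor $d^2$ and the constant $6$ will enter, roughly one factor of $d$ from bounding edge-boundary by vertex-boundary and another from extracting a large independent-behaving subset. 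Once each cut of the optimal boolean-width decomposition tree is shown to have vertex-boundary $O(d^2 \cdot \bw(G))$, standard relations between branch-width-style decompositions and treewidth complete the argument.
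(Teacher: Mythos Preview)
Your plan matches the paper's proof. For the upper bound the paper simply cites a known general inequality; for the lower bound it does exactly what you anticipate, greedily extracting an induced matching across each cut (each step removes at most $2d^2$ cut edges), which yields $\cbw(A)\geq \ccw(A)/(2d^2)$ with $\ccw(A)$ the number of crossing edges. The one point where the paper is sharper than your sketch is the choice of intermediate parameter: instead of vertex-boundary it works directly with the edge-count cut function, i.e.\ carving-width $\cw(G)$, and finishes via the known inequality $3\,\cw(G)\geq \tw(G)$---that is the precise ``standard relation'' you should name, and it is what produces the constant $1/(6d^2)$.
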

\begin{proof}
The second inequality is known to hold for any graph \cite{ABRV09}.
%Let the $cut-car:2^{V(G)}\rightarrow\mathbb{R}$ function of a graph $G$ be defined as
%$cut-car(A)=|\{uv \in E(G): u \in A \wedge v \in \overline{A}\}|$, i.e. the number of edges 
%crossing the cut $\{A, \overline{A}\}$. 
Using Definition~\ref{def_f_width} with $f=\ccw$ counting the number of edges in the cut $\{A, \overline{A}\}$,
we define the carving-width of a decomposition tree, and
the carving-width of a graph, denoted $\cw(G)$. This graph-theoretic parameter has been studied previously,
and in particular it is known that $3 \cdot \cw(G) \geq \tw(G)$~\cite{TSB00}.

Consider now our graph $G$. Observe that for any $A \subseteq V(G)$ there exists 
$S \subseteq A$ and $S' \subseteq \overline{A}$ such that $|S|=|S'|\geq \ccw(A)/(2d^2)$, each vertex $v$ in $S$ has 
a single neighbour $u$ in $S'$, and, moreover no other $v'\in S$ sees $u$, i.e., $N(u) \cap S = v$.
Indeed, start with the original cut $\{A,\overline{A}$, and as long as it is not empty, do the following.
Pick an edge $(v,u)$ in the cut, where $v\in A$ and $u\in \overline{A}$, add $v$ to $S$ and $u$ to $\overline{S}$,
and remove the vertices in $N(v) \cap \overline{A}$ and $N(u) \cap A$ together with incident edges. Since
in each iteration at most $2d^2$ edges are removed, there will be at least $\ccw(A)/(2d^2)$ iterations,
and therefore the size of $S$ will be as claimed. 

The existence of such $S,S'$ at once imply that 
\[
 |\left\{ N(K) \cap \overline{A}, ~K~ \subseteq A \right\}| ~\geq~ 2^{|S|}\,,
\]
as each subset $K \subseteq S$ has a distinct $N(K) \cap \overline{A}$.

To sum up, $2d^2\cbw(A) \geq \ccw(A)$.

Consider now the decomposition tree yielding the $\bw(G)$. Using the above observation, we conclude
that the maximal $\ccw(A)$ of any involved cut is at most $2d^2\bw(G)$, and thus $2d^2\bw(G) \geq \cw(G)$.
Recalling that $\cw(G) \geq {1 \over 3}\tw(G)$, the conclusion follows.
\end{proof}
As an easy corollary of this theorem we get the following result, which can be viewed as a counterpart of Theorem~\ref{th:bw} for sparse random graphs.
\begin{theorem}
 \label{th:bw2}
Let $G$ be a random $d$-regular graph on $n$. Then, almost surely, $\bw(G) = \Omega({n \over d})$.
\end{theorem}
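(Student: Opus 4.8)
The plan is to derive the bound directly from Theorem~\ref{th:regular}, which applies since a random $d$-regular graph has maximum degree exactly $d$. That theorem gives $\bw(G) \ge \frac{1}{6d^2}\tw(G)$, so it suffices to show that almost surely $\tw(G) = \Omega(n)$: for constant $d$ this immediately yields $\bw(G) = \Omega(n/d^2) = \Omega(n/d)$. Note that for $d \le 2$ the claim fails, since unions of cycles have treewidth at most $2$, so I assume $d \ge 3$ throughout, as is implicit in the statement.

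The core fact I would use is that a random $d$-regular graph is almost surely an expander. First I would establish vertex expansion: via a first-moment computation in the configuration model, one shows that almost surely every vertex subset $S$ with $|S| \le n/2$ satisfies $|N(S)| \ge c|S|$ for some constant $c=c(d)>0$. The union bound is taken over all ${n \choose s}$ sets $S$ of each size $s$ and over the ways the $ds$ half-edges leaving $S$ could land inside $S \cup N(S)$; the standard estimate shows that, once $c$ is chosen small enough, the exponential decay of the landing probabilities dominates the entropy term and the total failure probability is $o(1)$.

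Next I would convert expansion into a treewidth lower bound through balanced separators. A graph of treewidth $w$ admits a separator of size at most $w+1$ whose removal leaves every component with at most $n/2$ vertices; contrapositively, if every such balanced separator has size $\Omega(n)$ then $\tw(G)=\Omega(n)$. Expansion forces balanced separators to be large: to remove a set $X$ and isolate a component on the smaller side $S$ with $|S|=\Theta(n)$ one needs $X \supseteq N(S)$, and $|N(S)| \ge c|S| = \Omega(n)$. Hence $\tw(G)=\Omega(n)$ almost surely, and the theorem follows.

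The main obstacle is the expansion estimate itself: the configuration-model union bound must be carried out so that the decay from the landing probabilities beats the ${n \choose s}$ term uniformly over all $s$. A sharper route, yielding $\Omega(n/d)$ directly even when $d$ grows, is to work with carving-width: the proof of Theorem~\ref{th:regular} in fact establishes $2d^2\,\bw(G) \ge \cw(G)$, and edge expansion, $\ccw(A)=\Omega(d\,|A|)$ for balanced $A$, gives $\cw(G)=\Omega(dn)$ once one observes that every decomposition tree defines a cut $\{A,\overline{A}\}$ with $n/3 \le |A| \le 2n/3$. I would mention this refinement but rely on the treewidth route for the stated constant-$d$ bound.
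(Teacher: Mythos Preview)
Your proposal is correct. In fact, the ``sharper route'' you sketch in your final paragraph \emph{is} precisely the paper's proof: the paper argues that any decomposition tree contains a $(1/3,2/3)$-balanced cut $\{A,\overline{A}\}$, invokes the well-known edge expansion of random $d$-regular graphs to get $\ccw(A)=\Omega(dn)$, and then applies the inequality $\cbw(A)\ge \ccw(A)/(2d^2)$ established inside the proof of Theorem~\ref{th:regular} to conclude $\bw(G)=\Omega(n/d)$.

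Your chosen main route through treewidth is a legitimate alternative but a slight detour. You pass from vertex expansion to a linear lower bound on balanced separators, then to $\tw(G)=\Omega(n)$, and finally invoke the full statement of Theorem~\ref{th:regular}. This costs an extra factor of $d$ (yielding $\Omega(n/d^2)$), which you then absorb because $d$ is constant. The paper's argument is shorter because it bypasses treewidth and carving-width as global parameters and works directly with a single balanced cut, using only the pointwise inequality $2d^2\,\cbw(A)\ge\ccw(A)$; this also keeps the correct $\Omega(n/d)$ dependence uniformly in $d$. Since you already identified this route, you could simply promote it to your primary argument and drop the treewidth detour.
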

\begin{proof}
It is well known that every balanced cut (in our case, up to $({1 \over 3},{2 \over 3})$) in a random 
$d$-regular graph contains $\Omega(dn)$ edges. Consider the decomposition tree yielding the $\bw(G)$.  
By a standard argument, one of the involved cuts $\{A,\overline{A}\}$ must be $({1 \over 3},{2 \over 3})$-balanced,
and hence almost surely  $\ccw(A) =  \Omega(dn)$. Since, as we have seen in the proof of 
Theorem~\ref{th:regular}, $\cbw(A) \geq \ccw(A)/(2d^2)$, the statement follows.
\end{proof}

\subsection{Boolean cut-width and the VC-dimension of the family of neighbourhoods}
In this section we show that the Boolean cut-width of a cut is closely related to the 
VC-dimension of the related system of neighbourhoods, which in turn has a simple 
description in our case.

Let $\{A, \overline{A} \}$ be a cut in $G$, and let $M$ be the corresponding $|A|\times |\overline{A}|$
Boolean adjacency matrix. Consider the family ${\cal N} =  \{ N(K) \cap \overline{A} : ~K~ \subseteq A \}$. 
The Vapnik-Chervonenkis dimension of $\cal N$ is defined as the maximum size of $T \subseteq \overline{A}$
that is {\em shattered} by ${\cal N}$, i.e., any subset $T' \subseteq T$ is of the form  $N(K) \cap T$ for
some $K \subseteq A$. We shall denote this dimension as $\vc(A)$.
Observe that since ${\cal N}$ is closed under unions, $\vc(A)$  is just the size of the maximum permutation submatrix of $M$.   
\begin{theorem}
 \label{th:VC}
\[
 \vc(A) ~\leq~ \cbw(A) ~\leq~ \log n \cdot \vc(A) 
\]
\end{theorem}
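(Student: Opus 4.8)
The plan is to prove the two inequalities separately. The left one, $\vc(A) \leq \cbw(A)$, falls straight out of the definitions, while the right one, $\cbw(A) \leq \log n \cdot \vc(A)$, is an application of the Sauer--Shelah lemma followed by an elementary counting estimate.

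First I would establish the lower bound. Let $T \subseteq \overline{A}$ be a shattered set of size $d = \vc(A)$. By the definition of shattering, each of the $2^{d}$ subsets $T' \subseteq T$ is realized as $N(K) \cap T$ for some $K \subseteq A$. Since distinct traces on $T$ force distinct sets $N(K) \cap \overline{A}$, the family $\mathcal{N} = \{ N(K) \cap \overline{A} : K \subseteq A \}$ has at least $2^{d}$ members. By Definition~\ref{def:booleanWidth} we have $2^{\cbw(A)} = |\mathcal{N}|$, so taking logarithms yields $\cbw(A) \geq d = \vc(A)$.

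For the upper bound I would invoke the Sauer--Shelah lemma: a family of subsets of a ground set of size $m$ whose VC-dimension is $d$ has at most $\sum_{i=0}^{d} {m \choose i}$ members. Here the ground set is $\overline{A}$, of size $m = |\overline{A}| \leq n-1$ (the case $A = \emptyset$ being trivial), the family is $\mathcal{N}$, and its VC-dimension is $\vc(A)$ by definition, so $2^{\cbw(A)} = |\mathcal{N}| \leq \sum_{i=0}^{\vc(A)} {m \choose i}$. It then remains to bound this sum by $n^{\vc(A)}$. Mapping each subset $\{a_{1} < \ldots < a_{k}\}$ of $[m]$ of size $k \leq d$ to the zero-padded increasing tuple $(a_{1}, \ldots, a_{k}, 0, \ldots, 0) \in \{0, 1, \ldots, m\}^{d}$ is injective (the nonzero coordinates recover the set), whence $\sum_{i=0}^{d} {m \choose i} \leq (m+1)^{d} \leq n^{d}$. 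Taking base-$2$ logarithms gives $\cbw(A) \leq \vc(A) \cdot \log n$.

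The only nontrivial ingredient is the upper bound, and within it the genuine content is the Sauer--Shelah lemma; the companion estimate $\sum_{i=0}^{d} {m \choose i} \leq n^{d}$ is routine. I expect no real obstacle, though one should handle with a little care the degenerate cases $\vc(A) = 0$ (where $\mathcal{N} = \{\emptyset\}$ and both sides vanish) and $A = \emptyset$, so that the stated bound holds literally. I note in passing that the observation recorded just before the theorem---that $\mathcal{N}$ is union-closed and that $\vc(A)$ equals the size of the largest permutation submatrix of $M$---is not needed here, although it affords an alternative, purely combinatorial route to the lower bound by exhibiting an explicit shattered set.
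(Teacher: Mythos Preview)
Your proposal is correct and follows essentially the same route as the paper: the lower bound is immediate from the definition of shattering, and the upper bound is Sauer--Shelah followed by the elementary estimate $\sum_{i\le d}{n \choose i}\le n^{d}$. The only cosmetic difference is that the paper phrases the lower bound via the permutation-submatrix description of $\vc(A)$ (which you mention but do not use), whereas you argue directly from a shattered set; both are one-line observations.
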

\begin{proof}
The first inequality is obvious, since the existence of permutation submatrix of size $k$ in $M$ implies that $|{\cal N}| \geq 2^k$. It was already expoited in the proof of Theorem~\ref{th:regular}.
For the second inequality, we use the following fundamental lemma, variously attributed to Sauer, to Perles and Shelah, as well as to Vapnik and Chervonenkis:
\begin{lemma} {\bf[S,PS,VC Lemma]}
Let  ${\cal F}$ be a family of subsets of some underlying set of size $n$. Then, if 
$|{\cal F}| \geq \sum_{i=0}^k {n \choose i}$, then $\vc({\cal F}) > k$.
\end{lemma}
Since $\log_2 \sum_{i=0}^k {n \choose i} \;>\; k\log_2 (n/k)$, we conclude that 
\[
 \vc(A) ~=~ \vc({\cal N}) ~>~ {{\log_2 |{\cal N}}| \over {\log_2 n}} ~=~ {{\cbw(A)}\over {\log_2 n}}\;.
\]
\end{proof}

\section{Algorithmic Results}
In this section, again we let $G_p$ be a random graph on $n$ vertices where each edge is chosen randomly and independently 
with probability $p$. 
Notice that the proof of Theorem \ref{th:bw} actually yields the stronger result:

\begin{theorem}
\label{th:bw3}
Almost surely, any decomposition tree of $G_p$ has boolean-width $O\left( \frac{\ln^2 n}{p} \right)$.
\end{theorem}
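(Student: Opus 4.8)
The plan is to notice that nothing in the proof of Theorem~\ref{th:bw} ever fixes a particular decomposition tree: the whole argument produces a single high-probability event on the random graph $G_p$ that simultaneously bounds the cut-boolean-width of \emph{every} cut. So the stronger statement comes essentially for free, provided one is careful about the order of quantifiers.

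First I would isolate the good event. The union bound inside Lemma~\ref{cl:k-sets} shows that with probability $1-o(1)$ \emph{all} sets $S$ with $|S|=k_p$ satisfy $|N(S)| \geq |\overline{S}| - k_p$ at once; call this event $\mathcal{E}$. It is a property of $G_p$ alone. On $\mathcal{E}$, Corollary~\ref{cor:random_cut} applies to every subset $A \subseteq V(G)$: the neighbourhoods $N(S)\cap\overline{A}$ split into those coming from small sets ($|S|\le k_p$, at most $\sum_{i=0}^{k_p}\binom{n}{i}$ of them) and those from large sets ($|S|\ge k_p$, which by monotonicity from a $k_p$-subset and the defining inequality of $\mathcal{E}$ miss at most $k_p$ elements of $\overline{A}$, hence again at most $\sum_{i=0}^{k_p}\binom{n}{i}$). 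Taking logarithms gives $\cbw(A)=O(\ln^2 n/p)$, uniformly over all cuts of $G$.

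The conclusion then follows with no further randomness. Fix any decomposition tree $(T,\delta)$. By Definition~\ref{def_f_width} its boolean-width is the maximum of $\cbw(A)$ over the cuts $\{A,\overline{A}\}$ induced by the edges of $T$, and each of these is one of the cuts already controlled on $\mathcal{E}$. Hence on $\mathcal{E}$ we have $\bw(T,\delta)=O(\ln^2 n/p)$ for every tree simultaneously, and since $\mathcal{E}$ holds almost surely, so does the claimed bound.

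There is no genuine obstacle here; the only thing to watch is the quantifier ordering. The strength of the statement comes precisely from the fact that $\mathcal{E}$ is determined by $G_p$ \emph{before} any tree is specified, so the decomposition tree may even be chosen adversarially after the graph is revealed without affecting the bound. This is exactly what separates Theorem~\ref{th:bw3} from Theorem~\ref{th:bw}, where only the existence of one good tree was asserted.
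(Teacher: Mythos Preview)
Your proposal is correct and is exactly the paper's approach: the paper's proof of Theorem~\ref{th:bw3} is simply the remark that the argument for Theorem~\ref{th:bw} already establishes the stronger claim, since Lemma~\ref{cl:k-sets} and Corollary~\ref{cor:random_cut} yield a single almost-sure event under which \emph{every} cut (and hence every decomposition tree) is controlled. Your careful discussion of the quantifier ordering just makes explicit what the paper leaves as a one-line observation.
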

Theorem \ref{th:bw3} implies that a large class of vertex subset and vertex partitioning problems,
can be solved in quasi-polynomial time on random graphs. 
In this section we provide two corollaries to this effect.

\begin{definition}\label{def_sigma_rho}
\emph{Let $\sigma$ and $\rho$ be finite or co-finite subsets of natural numbers. 
A subset $X$ of vertices of a graph $G$ is a {\sl $(\sigma,\rho)$-set} of $G$ if
$$~\hfill\forall v \in V(G): |N(v) \cap X| \in  \left\{ 
   \begin{array}{ll}
    \sigma & \mbox{if $v \in X$}, \\
    \rho & \mbox{if $v \in V(G) \setminus X$}.
   \end{array} \right.\hfill~$$
Let $d(\mathbb{N}) = 0$.
For every finite or co-finite set $\mu \subseteq \mathbb{N}$, let $$d(\mu) = 1+min \{ max \{x : x \in \mu\}, max \{x: x \notin \mu\} \}$$
Let $d(\sigma,\rho)=max\{d(\sigma),d(\rho)\}$.}
\end{definition}

The $(\sigma,\rho)$ {\sl vertex subset problems} consist of finding the size of a minimum or maximum $(\sigma$,$\rho)$-set in $G$.

\begin{theorem} \label{thm_vs} [\cite{BTV09II}]
%For every $n$-vertex, $m$-edge graph $G$ and every decomposition tree $(T,\delta)$ of $G$, the vertex subset problems can be solved either in
%$O(n(m+d\beta2^{3d\beta^2+\beta}))$ time
%or in $O(n(m+d\rho2^{3d\rho^2+\rho}))$ time,
%where $d$ stands for $d(\sigma,\rho)$ in Definition~\ref{dNeighbor}, and $\beta$ (resp.\ $\rho$) stands for the boolean-width (resp.\ rank-width) of $(T,\delta)$.
For every $n$-vertex, $m$-edge graph $G$ given along with a decomposition tree of boolean-width $k$,
 any minimum or maximum $(\sigma,\rho)$ vertex subset problem on $G$ can be solved in $O(n(m+d \cdot k 2^{3d \cdot k^2 + k}))$ time,
 where $d$ stands for $d(\sigma,\rho)$.
\end{theorem}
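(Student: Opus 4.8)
The plan is to solve the problem by dynamic programming over the given decomposition tree $(T,\delta)$, processed from the leaves toward a chosen root. Rooting $T$ at an arbitrary edge, every node $w$ is associated with the set $V_w\subseteq V(G)$ of vertices that $\delta$ maps to the leaves below $w$, and the edge above $w$ induces the cut $\{V_w,\overline{V_w}\}$. Throughout, write $A=V_w$ and $k=\bw(T,\delta)$, so that $\cbw(A)\le k$ for every such cut. A partial solution at $w$ is a set $X\subseteq A$, and the whole algorithm rests on grouping partial solutions into few classes that behave identically once completed by some $Y\subseteq\overline{A}$ into a candidate $(\sigma,\rho)$-set $X\cup Y$.

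The right notion is $d$-neighbour equivalence, where $d=d(\sigma,\rho)$: two sets $X,X'\subseteq A$ are equivalent over $A$ when $\min(d,|N(u)\cap X|)=\min(d,|N(u)\cap X'|)$ for every $u\in\overline{A}$. Capping the count at $d$ is exactly what makes finiteness or co-finiteness of $\sigma,\rho$ usable: whether a vertex $v$ satisfies its local constraint ($|N(v)\cap X|\in\sigma$ if $v\in X$, else $\in\rho$) is decided once we know the neighbour count up to $d$, by Definition~\ref{def_sigma_rho}. I would index the table $\mathrm{Tab}_w$ by a pair $(R,\widehat{R})$, where $R$ ranges over the $d$-neighbour classes over $A$ (the class of the actual partial solution $X\cap A$) and $\widehat{R}$ ranges over the $d$-neighbour classes over $\overline{A}$ (a hypothesis about the counts the eventual $X\cap\overline{A}$ will contribute to the vertices of $A$). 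The entry $\mathrm{Tab}_w[R,\widehat{R}]$ stores the optimum of $|X\cap A|$ over all $X\cap A$ in class $R$ for which every $v\in A$ is already locally satisfied once the outside contribution promised by $\widehat{R}$ is added.

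The technical heart, and the step I expect to be the main obstacle, is bounding the number of $d$-neighbour classes on each side of a cut in terms of $\cbw$. Using the reading of $\cbw(A)$ as the logarithm of the number of distinct rows spanned by boolean sums, one shows by representative and boolean-matrix arguments (in the spirit of the $d$-neighbour equivalence framework) that the number of $d$-neighbour classes over $A$ is $2^{O(d\cdot \cbw(A)^2)}\le 2^{O(dk^2)}$, and that a small-support representative of each class is computable in polynomial time. This single bound is what turns a decomposition of small boolean-width into small tables: $\mathrm{Tab}_w$ then has $2^{O(dk^2)}$ entries. Getting the exponent right, rather than the doubly-exponential $(d+1)^{2^{k}}$ coming from a naive column-type count, is precisely where care is needed, and it is the place boolean-width genuinely enters.

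It then remains to give the recurrence and prove correctness by induction from leaves to root. At a leaf $\delta^{-1}(v)$ the two partial solutions $\emptyset$ and $\{v\}$ seed the table directly. At an internal node $w$ with children $a,b$ (so $V_w=V_a\cup V_b$), I would fill $\mathrm{Tab}_w$ by ranging over compatible child data: a representative for $V_a$, a representative for $V_b$, and their induced hypothesis classes, combining the stored sizes and updating the class at $w$ through the precomputed representatives. This extra ranging over child representatives supplies the remaining $2^{O(dk^2)}$ factor, the $2^{k}$ factor (the value of $nec_1$), and the $d\cdot k$ per-operation cost, while the $O(m)$ term absorbs the initial neighbourhood computations, yielding the per-node work $O(m+d\cdot k\,2^{3dk^2+k})$. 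Correctness follows because every genuine $(\sigma,\rho)$-set restricts at each node to a tracked class, and the hypothesis index $\widehat{R}$ is forced to agree with reality exactly at the root, where $\overline{V_{\mathrm{root}}}=\emptyset$; the answer is read from the unique entry whose hypothesis is the empty-complement class, and min versus max is handled by the choice of optimum stored. Multiplying the $O(n)$ nodes by the per-node work gives the claimed running time.
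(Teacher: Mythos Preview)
The paper does not prove this theorem at all: it is quoted as an external result from \cite{BTV09II}, with no argument given. So there is no ``paper's own proof'' to compare your proposal against.

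That said, your sketch is a faithful outline of the actual argument in \cite{BTV09II}: dynamic programming over a rooted decomposition tree, indexing tables by $d$-neighbour equivalence classes on both sides of each cut, bounding the number of such classes by $2^{O(d\cdot k^2)}$ via the boolean-width structure, and combining child tables at internal nodes. You correctly identify the bound on the number of $d$-neighbour classes as the crux; in \cite{BTV09II} this is obtained by showing that representatives can be chosen of size $O(d\cdot k)$ from a set of $2^k$ ``canonical'' elements, giving ${2^k \choose dk}\le 2^{dk^2}$ classes. Your proposal is essentially correct as a proof plan, but for the purposes of this paper no proof is required or offered.
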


\begin{corollary}
Any minimum or maximum $(\sigma,\rho)$ vertex subset problem on $G_p$ can be solved in $O^*(2^{O(d(\sigma,\rho) \cdot \log ^4 n)})$ time.
\end{corollary}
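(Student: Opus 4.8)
The plan is to combine the two ingredients directly: the high-probability bound on the boolean-width of \emph{every} decomposition tree from Theorem~\ref{th:bw3}, and the fixed-parameter algorithm of Theorem~\ref{thm_vs}. First I would observe that, for constant $p$, Theorem~\ref{th:bw3} guarantees that almost surely every decomposition tree of $G_p$ has boolean-width $k = O(\log^2 n)$, since $\ln^2 n / p = O(\log^2 n)$ when $1/p$ is a constant (and $\ln n$ and $\log_2 n$ differ only by a constant factor). The key structural consequence is that no search for a good decomposition tree is required: we may simply take an arbitrary decomposition tree of $G_p$, which can be produced in linear time, and it will almost surely already have boolean-width $O(\log^2 n)$.

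Next I would feed this arbitrary decomposition tree, together with $G_p$, into the algorithm of Theorem~\ref{thm_vs}, whose running time is $O(n(m + d \cdot k\, 2^{3d k^2 + k}))$ with $d = d(\sigma,\rho)$. Substituting $k = O(\log^2 n)$, the exponent is dominated by the term $3d k^2 = O(d \log^4 n)$; the additive $k$ in the exponent is of lower order, and the polynomial prefactors $n$, $m \le n^2$, and $d \cdot k$ are all absorbed into the $O^*$ notation. This yields the claimed running time $O^*(2^{O(d(\sigma,\rho) \cdot \log^4 n)})$, holding almost surely over the choice of $G_p$.

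The argument has no genuine obstacle; it is a routine composition of two previously established results. The one point worth emphasising is why Theorem~\ref{th:bw3}, rather than the weaker Theorem~\ref{th:bw}, is the right tool. Computing or even approximating an optimal decomposition tree could in principle be costly, which would leave the quasi-polynomial bound merely existential. Theorem~\ref{th:bw3} sidesteps this entirely: because the boolean-width bound holds uniformly over all decomposition trees, the otherwise nontrivial preprocessing step collapses to picking any tree, and it is precisely this uniformity that makes the bound algorithmically meaningful.
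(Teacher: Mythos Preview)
Your proposal is correct and follows exactly the approach the paper intends: the corollary is stated without proof, but the surrounding text makes clear it is obtained by plugging the uniform bound of Theorem~\ref{th:bw3} (so that any, e.g.\ trivially constructed, decomposition tree suffices) into the running time of Theorem~\ref{thm_vs}, and your substitution $k=O(\log^2 n)$ yielding exponent $O(d\log^4 n)$ is precisely what is meant. Your remark that constant $p$ is being assumed for the stated $\log^4 n$ bound is a fair clarification the paper leaves implicit.
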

Several NP-hard problems are expressible in this framework, {\sl e.g.} problems like
Max Independent Set (with $\sigma=\{0\}$, $\rho=\mathbb{N}$, $d(\sigma,\rho)=1$) and
Min Dominating Set ($\sigma=\mathbb{N}$, $\rho=\{1,2, \dots \}$, $d(\sigma,\rho)=2$).
Also problems like
Min $p$-Dominating Set ($\sigma=\mathbb{N}$, $\rho=\{p,p+1, \dots \}$, $d(\sigma,\rho)=p+1$) and
Max Induced $p$-Bounded Degree Subgraph ($\sigma=\{ 0,1, \dots p\}$, $\rho=\mathbb{N}$, $d(\sigma,\rho)=p+1$).

The framework is extendible to problems asking for a partition of $V(G)$ into $q$ classes,
with each class satisfying a certain $(\sigma,\rho)$-property, as follows.

% Max Independent Set($\sigma = \{0\}$, $\rho = \mathbb{N}$),
% Min Dominating Set($\sigma = \mathbb{N}$, $\rho = \mathbb{N} \setminus \{0\}$),
% Max Strong Stable Set($\sigma = \{0\}$, $\rho = \{0,1\}$),
% Max or Min Perfect Code($\sigma = \{0\}$, $\rho = \{1\}$),
% Min Total Dominating Set($\sigma = \mathbb{N} \setminus \{0\}$, $\rho = \mathbb{N} \setminus \{0\}$),
% Max or Min Independent Dominating Set($\sigma = \{0\}$, $\rho = \mathbb{N} \setminus \{0\}$) and Min Perfect Dominating Set($\sigma = \mathbb{N}$, $\rho = \{1\}$).
% Also if we let $M_k = \{ 1,2, \dots k\}$ then
% Min $k$-Dominating Set($\sigma = \mathbb{N}$, $\rho = \mathbb{N} \setminus M_k$),
% Max Induced $k$-Regular Subgraph($\sigma = \{k\}$, $\rho = \mathbb{N}$) and
% Max Induced $k$-Bounded Degree Subgraph($\sigma = M_k$, $\rho = \mathbb{N}$),

\begin{definition}
\label{def1}
\emph{Let $D_q$ be a $q$ by $q$ matrix with entries being 
finite or co-finite subsets of natural numbers. % $\{0, 1,...\}$.
A {\sl $D_q$-partition} in a graph $G$ is a partition $\{V_1,V_2,...,V_q\}$ of 
$V(G)$ such that 
for $1 \leq i,j \leq q$ we have 
$\forall v \in V_i:
|N(v) \cap V_j| \in D_q[i,j]$.
Let $d(D_q)=\max_{i,j}d(D_q[i,j])$.}
\end{definition}

%For technical reasons, we will allow a partition
%$\{V_1,V_2,...,V_q\}$ of $V(G)$ to possibly have some empty partition classes,  
%{\sl i.e.}, if the degree constraints on a partition class $V_i$ are
%satisfied by $V_i = \emptyset$ then we allow this possibility.
The {\sl vertex partitioning problems}  consist of deciding if $G$ has
a $D_q$ partition, the so-called $D_q$-problem.
NP-hard problems fitting into this framework include {\sl e.g.} for any fixed graph $H$
the problems known as $H$-Coloring or $H$-Homomorphism (with $K_q$-Coloring deciding if chromatic number is at most $k$), $H$-Covering, $H$-Partial Covering, and in general the question of deciding if an input
graph has a partition into $q$ $(\sigma, \rho)$-sets.

\begin{theorem} \label{th:2} [\cite{BTV09II}]
\label{theo_vertex_part}
For every $n$-vertex, $m$-edge graph $G$ given along with a decomposition tree  having boolean-width $k$,
any $D_q$-problem on $G$ can be solved in
%$O(n(m+qd*rw(G)2^{3qd*rw(G)^2+rw(G)}))$ time, resp.\ in
$O(n(m+qd k 2^{3qd k^2 + k}))$ time, where $d$ stands for $d(D_q)$.
\end{theorem}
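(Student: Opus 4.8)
Since this statement is quoted from \cite{BTV09II}, it is used here as a black box; nonetheless, the natural route to a proof is a bottom-up dynamic program over the given decomposition tree $(T,\delta)$, and I sketch that route below. The plan is to first root $T$ at an arbitrary edge, so that every node $w$ becomes associated with the cut $\{V_w,\overline{V_w}\}$, where $V_w\subseteq V(G)$ is the set of vertices mapped by $\delta$ to leaves below $w$. A $D_q$-partition restricted to $V_w$ is a $q$-tuple $(X_1,\dots,X_q)$ of disjoint subsets of $V_w$ covering $V_w$; the algorithm will store, at each node $w$, a table of realizable \emph{signatures} of such partial partitions, where a signature records only the information about $(X_1,\dots,X_q)$ that can still affect feasibility of the constraints $D_q[i,j]$ across the cut.

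The crucial definition is the $d$-neighbor equivalence across a cut: for $X,Y\subseteq A$ say $X\equiv^{d}_{A}Y$ if $\min(d,|N(v)\cap X|)=\min(d,|N(v)\cap Y|)$ for every $v\in\overline A$, with $d=d(D_q)$. Two partial partitions whose parts are $d$-neighbor equivalent, part by part, are interchangeable, because the only thing a future decision above the cut needs to know about $V_i\cap V_w$ is, for each outside vertex $v$, the count $|N(v)\cap(V_i\cap V_w)|$ capped at $d$ (larger counts are indistinguishable for finite-or-cofinite $D_q[i,j]$). The signature at $w$ is therefore the $q$-tuple of $\equiv^{d}$-classes together with a tally of which internal constraints are already discharged, and the table lists exactly the signatures realizable by some partial partition of $V_w$.

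The technical heart, and the step I expect to be the main obstacle, is bounding the number of $d$-neighbor equivalence classes of a single part across a cut of boolean-width $k$ by $2^{O(dk^2)}$. This is precisely where Definition~\ref{def:booleanWidth} enters: by definition there are only $2^{\cbw(A)}\le 2^{k}$ distinct boolean neighborhoods $N(X)\cap\overline A$, and one must lift this $1$-neighbor bound to the $d$-neighbor setting. I would do so by encoding a $d$-neighbor class through the nested threshold sets $N^{(i)}(X)\cap\overline A=\{v:\ |N(v)\cap X|\ge i\}$ for $i=1,\dots,d$, each of which is controlled by the $2^k$ count, via a representative argument: every $1$-neighbor class admits a representative of size at most $k$, and these representatives, taken with multiplicities up to $d$, pin down any $d$-neighbor class. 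Carrying the count through $q$ parts yields $2^{O(qdk^2)}$ signatures per side, and matching the exact exponent $3qdk^2$ requires accounting for the three cuts that meet at an internal node when tables are combined.

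It then remains to specify the transitions and verify the running time. At a leaf the table is trivial: one vertex, assigned to one of the $q$ parts. At an internal node $w$ with children $a,b$ (so $V_w=V_a\cup V_b$, a disjoint union) one joins the tables of $a$ and $b$: a signature of $V_a$ and one of $V_b$ are compatible when their induced capped counts on $\overline{V_w}$ are consistent, and their combination produces a $w$-signature whose $\equiv^{d}$-classes are computed from those of the children and whose tally is updated for the vertices and edges now wholly inside $V_w$. Since each table has at most $2^{O(qdk^2)}$ entries and the join compares triples of classes across the three incident cuts (the two children and the parent), each of the $O(n)$ nodes costs $O(qdk\,2^{3qdk^2+k})$ after an $O(m)$ preprocessing of neighborhoods, giving the claimed $O(n(m+qdk\,2^{3qdk^2+k}))$ bound. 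At the root, whose cut is trivial since $\overline{V_{\mathrm{root}}}=\emptyset$, a nonempty table certifies a valid $D_q$-partition, settling the decision problem; the single-set case is recovered by the same argument with $q=1$, matching Theorem~\ref{thm_vs}.
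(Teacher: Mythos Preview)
The paper does not prove this theorem at all: it is imported verbatim from \cite{BTV09II} and used as a black box to derive the subsequent corollary on random graphs. You recognize this explicitly in your first sentence, so there is nothing to compare against here; your sketch of the $d$-neighbor-equivalence dynamic program is a faithful outline of the approach in the cited reference.
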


\begin{corollary}
Any $D_q$-problem on $G_p$ can be solved in 
 $O^*(2^{O(q \cdot d(D_q) \times \log ^4 n)})$ time.
\end{corollary}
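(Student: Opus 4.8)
The plan is to combine the structural guarantee of Theorem~\ref{th:bw3} with the algorithmic running time of Theorem~\ref{th:2}, in direct analogy with the preceding corollary for vertex subset problems. The key point is that Theorem~\ref{th:bw3} bounds the boolean-width of \emph{every} decomposition tree of $G_p$, so no optimization over decompositions is required: I would simply take any decomposition tree $(T,\delta)$ that is computable in polynomial time, say a balanced one, and note that almost surely its boolean-width is $k = \bw(T,\delta) = O(\ln^2 n / p)$. Treating the edge probability $p$ as a fixed constant, this reads $k = O(\log^2 n)$.

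Next I would feed this tree into Theorem~\ref{th:2} with the above value of $k$ and with $d = d(D_q)$, yielding a running time of $O(n(m + qdk\,2^{3qdk^2+k}))$. The dominant contribution is the exponential factor $2^{3qdk^2+k}$. Since $k = O(\log^2 n)$ we have $k^2 = O(\log^4 n)$, and the linear term $k$ in the exponent is dominated by the quadratic term, so the exponent is $O(qd\log^4 n)$; this gives $2^{O(q \cdot d(D_q)\cdot\log^4 n)}$.

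Finally, I would verify that the remaining factors are absorbed by the $O^*$ notation. The prefactor is $n(m + qdk)$, a product of $n$, the edge count $m \leq {n \choose 2}$, the polylogarithmic quantity $k$, and $qd$, each of which is polynomially bounded and dominated by the exponential term. Hence the total running time is $O^*(2^{O(q \cdot d(D_q) \times \log^4 n)})$, as claimed. The argument is essentially mechanical and presents no real obstacle; the only two points deserving care are that the ``almost surely'' bound of Theorem~\ref{th:bw3} applies to the fixed, arbitrary tree we choose (so that we never have to search for a good decomposition), and that $p$ is taken to be constant, so that the factor $1/p$ inherited from the boolean-width bound disappears into the constant hidden in the exponent.
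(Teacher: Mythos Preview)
Your proposal is correct and matches the paper's approach exactly: the paper states this corollary without proof, treating it as the immediate consequence of plugging the bound $k=O(\log^2 n)$ from Theorem~\ref{th:bw3} (for any decomposition tree, with $p$ constant) into the running time of Theorem~\ref{th:2}, just as you do. Your care in noting that no search over decompositions is needed and that $p$ is absorbed as a constant is appropriate and reflects the paper's implicit reasoning.
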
 

Simple extensions will allow also to solve weighted versions and search versions of both the vertex subset and vertex partitioning problems.

%%%%%%%%%%%%%%%%%%%%%%%%%%%%%%%%%%%%%%%%%%%%%%%%%%%%%%%%%%%%%%%%%%%%%%%%%%%%%%%%
%%%%%%%%%%%%%%%%%%%%%%%%%%%%%%%%%%%%%%%%%%%%%%%%%%%%%%%%%%%%%%%%%%%%%%%%%%%%%%%%

%%%%%%%%%%%%%%%%%%%%%%%%%%%%%%%%%%%%%%%%%%%%%%%%%%%%%%%%%%%%%%%%%%%%%%%%%%%%%%%%
%%%%%%%%%%%%%%%%%%%%%%%%%%%%%%%%%%%%%%%%%%%%%%%%%%%%%%%%%%%%%%%%%%%%%%%%%%%%%%%%
%\bibliography{biblo}

%\bibliographystyle{plain}

%%%%%%%%%%%%%%%%%%%%%%%%%%%%%%%%%%%%%%%%%%%%%%%%%%%%%%%%%%%%%%%%%%%%%%%%%%%%%%%%
%%%%%%%%%%%%%%%%%%%%%%%%%%%%%%%%%%%%%%%%%%%%%%%%%%%%%%%%%%%%%%%%%%%%%%%%%%%%%%%%

\end{document}